\newtheorem{theorem}{Theorem}[section]
\newtheorem{definition}[theorem]{Definition}
\newtheorem{notation}[theorem]{Notation}
\newenvironment{proof}[1][Proof]{\noindent \textbf{#1.} }{\  \rule{0.5em}{0.5em}}
\begin{document}

\title{Stochastic differential equations with path-independent solutions. }
\author{I.P. van den Berg \\
Departamento de Matem\'{a}tica, Universidade de \'{E}vora, Portugal\\
e-mail ivdb@uevora.pt}
\date{}
\maketitle

\begin{abstract}
We present a condition for a stochastic differential equation $dX_{t}=\mu
\left( t,X_{t}\right) dt+\sigma \left( t,X_{t}\right) dB_{t}$\ to have a
unique functional solution of the form $Z(t,B_{t})$. The condition expresses
a relation between $\mu $ and $\sigma $. A generalization concerns solutions
of the form $Z(t,Y_{t})$, where $Y_{t}$ is an Ito-process satisfying a
stochastic differential equation with coefficients only depending on time,
to be determined from $\mu $ and $\sigma $. The solutions in question are
obtained by solving a system of two partial differential equations, which
may be reduced to two ordinary differential equations.

\medskip

\textit{Keywords}: Stochastic differential equations, systems of partial
differential equations, Ito's Lemma.

\medskip

\textit{AMS classfication}: 60H10, 35F20.
\end{abstract}

\section{Introduction}

Ito's lemma gives conditions for a stochastic differential equation to have
a solution in terms of a function $Z(t,Y_{t})$ of time and some Ito-process $%
Y_{t}$: the coefficients of the stochastic differential equation should have
a particular expression in terms of the partial derivatives of $Z$. Ito's
lemma does not provide a direct method to find the function $Z$.

For a definite class of stochastic differential equations we show that such
a function may be obtained in the form of a solution of two subsequent
ordinary differential equations. These equations may be reduced to some
equations used in the differential method of H. Doss in \cite{Doss}. This
method solves autonomous stochastic differential equations path-by-path,
formulating a particular ordinary differential equation along each
individual path of the process $Y$.

Here we give "integration conditions" on the coefficients of a given
non-autonomous stochastic differential equation, for it to be solved by a
global function $Z$, through only one pair of ordinary differential
equations. The integration conditions permit to determine an auxiliary
process $Y$, and the function $Z$ will depend only on time $t$ and the
values taken by the process $Y$ at the time $t$; as such it is independent
of the paths of the process $Y$. We consider in particular the special case
where $Y$ may be taken equal to the Standard Brownian Motion.

The ordinary differential equations in question correspond to a system of
two first-order partial differential equations in three variables, solved
along two particular paths (in time and in space). In fact we show that the
above mentioned integration conditions correspond to a well-known
integration condition for systems of partial differential equations to have
a unique global solution.

This article has the following structure. In Section \ref{Sektiesamenvatting}
we present our differential approach in more detail, and compare it with the
differential method by H. Doss. In Section \ref{sectieexistentie} we recall
some existing theory on the resolution of systems of first-order partial
differential equations. In Section \ref{sectiestochastiek} we state formal
theorems on the solution of stochastic differential equations with their
respective proofs. We comment on the role and form of the integration
conditions, and end with some examples.

Thorough treatments of stochastic differential equations can be found in for
example \cite{Arnold}, \cite{Oksendahl} and \cite{Protter}. The latter book
also gives a presentation of the result of \cite{Doss}.

The books \cite{CourantHilbert} and \cite{Zwillinger} are books of reference
for partial differential equations, and a treatment of the background in
differential geometry useful for the solution of systems of first-order
partial differential equations is given in \cite{Boothby}.

The integration condition of a global solution of stochastic differential
equations in terms of Brownian Motion has been stated as a sort of limit
case for the existence of a global solution of stochastic difference
equations in terms of the discrete Wiener Walk in \cite{vandenberglobry}.

\section{Overview of the method\label{Sektiesamenvatting}}

We consider stochastic differential equations of the form 
\begin{equation}
\left \{ 
\begin{array}{lllll}
dX_{t} & = & \mu \left( t,X_{t}\right) dt+\sigma \left( t,X_{t}\right) dB_{t}
& \qquad & 0\leq t<T \\ 
X_{0} & = & x_{0}, &  & 
\end{array}%
\right.  \label{stochdifverg}
\end{equation}%
on some appropriate probability space $\Omega $, where $\mu $ and $\sigma $
have some regularity and $x_{0}$ is a constant.

We show that (\ref{stochdifverg}) has a functional solution of the form $%
Z(t,B_{t})$, where $Z:[0,T]\times \mathbb{R}\rightarrow \mathbb{R}$,
provided $\mu $ and $\sigma $ are related by the partial differential
equation 
\begin{equation}
\sigma \frac{\partial \mu }{\partial X}-\mu \frac{\partial \sigma }{\partial
X}-\frac{\partial \sigma }{\partial t}-\frac{\sigma ^{2}}{2}\frac{\partial
^{2}\sigma }{\partial X^{2}}=0.  \label{integratieconditiemsintroductie}
\end{equation}%
In fact $Z$ satisfies the system of two first-order partial differential
equations 
\begin{equation}
\left \{ 
\begin{array}{lll}
\frac{\partial Z(t,x)}{\partial t} & = & \mu (t,Z(t,x))-\frac{1}{2}\sigma
(t,Z(t,x))\frac{\partial \sigma (t,Z(t,x))}{\partial Z} \\ 
\frac{\partial Z(t,x)}{\partial x} & = & \sigma (t,Z(t,x)) \\ 
Z(0,0) & = & x_{0},%
\end{array}%
\right.  \label{systeemz}
\end{equation}%
and (\ref{integratieconditiemsintroductie}) represents the integration
condition for the system (\ref{systeemz}). Indeed, for a well-defined
two-times differentiable surface to be a solution, one must have $\frac{%
\partial ^{2}Z}{\partial t\partial x}=\frac{\partial ^{2}Z}{\partial
x\partial t}$, which amounts to (\ref{integratieconditiemsintroductie}).
Under an additional Lipschitz condition the solution of (\ref{systeemz}) is
also unique, meaning that it does not depend on the choice of the path of
integration. Hence any convenient path will do. In particular one may
integrate firstly horizontally from $(0,0)$ to $(T,0)$, and then vertically
from $(T,0)$ to $(T,x)$, thus resolving subsequently the two ordinary
differential equations%
\begin{equation}
\left \{ 
\begin{array}{lll}
\frac{dZ}{dt} & = & \mu (t,Z)-\frac{1}{2}\sigma (t,Z)\frac{\partial \sigma
(t,Z)}{\partial Z} \\ 
Z(0) & = & x_{0}%
\end{array}%
\right.  \label{Vergelijkinggewoon1}
\end{equation}%
and%
\begin{equation}
\left \{ 
\begin{array}{lll}
\frac{dZ_{T}}{dx} & = & \sigma (T,Z_{T}) \\ 
Z_{T}(0) & = & Z_{0}(T),%
\end{array}%
\right.  \label{Vergelijkinggewoon2}
\end{equation}%
with $Z(T,x)=Z_{T}(x)$. In principle, the solution of the successive
ordinary differential equations (\ref{Vergelijkinggewoon1}) and (\ref%
{Vergelijkinggewoon2}) gives an a priori method to solve a class of
stochastic differential equations, in contrast to the usual a posteriori
justification by Ito's Lemma, where one concludes that a given function
satisfies the stochastic differential equation by verifying some identities
in terms of the coefficients of the equation and some partial derivatives of
the function. As such, the method is a sort of a converse to Ito's Lemma.

The method of solution by (\ref{Vergelijkinggewoon1}) and (\ref%
{Vergelijkinggewoon2}) is similar to the differential representation of the
solution of autonomous stochastic differential equations 
\begin{equation}
\left \{ 
\begin{array}{lllll}
dX_{t} & = & \mu \left( X_{t}\right) dt+\sigma \left( X_{t}\right) dB_{t} & 
\qquad & 0\leq t<T \\ 
X_{0} & = & x_{0}, &  & 
\end{array}%
\right.  \label{stochdifvergautonoom}
\end{equation}%
of H. Doss in \cite{Doss} (see also \cite{Protter}), which is also an a
priori method based on the successive integration of two ordinary
differential equations.

The representation in \cite{Doss} of the solution of (\ref{stochdifverg}) is%
\begin{equation*}
X_{t}=H(D_{t},B_{t}),
\end{equation*}%
where $D_{t}(\omega )$ satisfies for nearly all $\omega \in \Omega $%
\begin{equation}
\left \{ 
\begin{array}{l}
\begin{array}{l}
D_{t}^{\prime }(\omega )=\exp \left( -\int \limits_{0}^{B_{t}(\omega )}\sigma
^{\prime }(H(D_{t}(\omega ),\xi )d\xi \right) \times \\ 
\text{ \  \  \  \  \  \ }\left( \mu (H(D_{t}(\omega ),B_{t}(\omega )))-\frac{1}{2}%
\sigma (H(D_{t}(\omega ),B_{t}(\omega )))\sigma ^{\prime }(H(D_{t}(\omega
),B_{t}(\omega )))\right)%
\end{array}
\\ 
D_{0}(\omega )=x_{0},%
\end{array}%
\right.  \label{VergelijkingHD1}
\end{equation}%
and $H$ satisfies%
\begin{equation}
\left \{ 
\begin{array}{lll}
\frac{\partial H(D,B)}{\partial B} & = & \sigma (H(D,B)) \\ 
H(D,0) & = & D.%
\end{array}%
\right.  \label{VergelijkingHD2}
\end{equation}

To compare both integrations, let us begin by noting that in the latter
approach typically firstly the equation (\ref{VergelijkingHD2}) is solved
and then the equation in time (\ref{VergelijkingHD1}), while the former
method starts with the equation in time (\ref{Vergelijkinggewoon1}) followed
by the equation in space (\ref{Vergelijkinggewoon2}).

The equation in time (\ref{Vergelijkinggewoon1}) corresponds to integrating (%
\ref{VergelijkingHD1}) along the path, say $\omega _{0}$, of Brownian Motion
which is everywhere $0$; then the integration of (\ref{VergelijkingHD2}) is
only over an interval of length $0$. Indeed, identifying $D_{t}(\omega _{0})$
with a real function $D$, equation (\ref{VergelijkingHD1}) amounts to%
\begin{equation}
\left \{ 
\begin{array}{lll}
\frac{dD}{dt} & = & \mu (H(D,0))-\frac{1}{2}\sigma (H(D,0))\frac{d\sigma
(H(D,0))}{dH}=\mu (D)-\frac{1}{2}\sigma (D)\frac{d\sigma (D)}{dH} \\ 
D(0) & = & x_{0}.%
\end{array}%
\right.  \label{VergelijkingHDspeciaal1}
\end{equation}%
On the other hand (\ref{VergelijkingHD2}) becomes, identifying $H(D,B)$ with
a real function $H_{D}(B)$ at the point $D=D$,%
\begin{equation}
\left \{ 
\begin{array}{lll}
\frac{dH_{D}(B)}{dB} & = & \sigma (H_{D}(B))) \\ 
H_{D}(0) & = & D.%
\end{array}%
\right.  \label{VergelijkingHDspeciaal2}
\end{equation}%
From the initial condition of (\ref{VergelijkingHDspeciaal2}), one obtains%
\begin{equation*}
X_{t}(\omega _{0})=H(D_{t}(\omega _{0}),0)=D_{t}(\omega _{0}),
\end{equation*}%
as (trivial) solution of (\ref{VergelijkingHDspeciaal2}).

Suppose now that the stochastic differential equation (\ref{stochdifverg})
has indeed a solution $\widetilde{H}(t,B_{t})$ which depends only on time
and the values taken by Brownian motion. We will again recognize (\ref%
{Vergelijkinggewoon1}) when integrating along a horizontal path, say $\omega 
$, and the "vertical equation" (\ref{Vergelijkinggewoon2}). One notes \cite%
{Doss} that%
\begin{equation*}
\frac{\partial H(D,B)}{\partial D}=\exp \left( \int \limits_{0}^{B_{t}(\omega
)}\sigma ^{\prime }(H(D_{t}(\omega ),\xi )d\xi \right) .
\end{equation*}%
Hence we derive from (\ref{VergelijkingHD1}) indeed%
\begin{equation*}
\frac{\partial \widetilde{H}(t,B_{t})}{\partial t}=\text{\ }\mu (\widetilde{H%
}(t,B_{t}))-\frac{1}{2}\sigma (\widetilde{H}(t,B_{t}))\sigma ^{\prime }(%
\widetilde{H}(t,B_{t})),
\end{equation*}%
with $\widetilde{H}(t,B_{t})=H(D_{t}(\omega ),B_{t}(\omega ))$ given by 
\begin{equation*}
\left \{ 
\begin{array}{lll}
\frac{\partial \widetilde{H}(t,B_{t})}{\partial B} & = & \sigma (\widetilde{H%
}(t,B_{t})) \\ 
\widetilde{H}(t,0) & = & D_{t}(\omega ).%
\end{array}%
\right.
\end{equation*}

We recall that (\ref{stochdifverg}) has a global solution of type $%
\widetilde{H}(t,B_{t})$, only if $\mu $ and $\sigma $ satisfy the
integration condition (\ref{integratieconditiemsintroductie}).

The equation (\ref{VergelijkingHD1}) shows that the approach of \cite{Doss}
is essentially path-dependent: each individual path of Brownian Motion
generates a pair of ordinary differential equations which determines a
particular solution. On the contrary, if the integration condition (\ref%
{integratieconditiemsintroductie}) holds, one couple of ordinary
differential equations will yield a global solution $Z$, valid for all
paths. The solution is path-independent in the sense that if $\omega ,\omega
^{\prime }\in \Omega $ are such that $B_{t}(\omega )=B_{t}(\omega ^{\prime
}) $ at some time $t$, it holds that $Z(t,B_{t}(\omega ))=Z(t,B_{t}(\omega
^{\prime }))$.

In fact we will present a somewhat more general method to find a global
solution of stochastic differential equations, for a class of equations
which not necessarily satisfies (\ref{integratieconditiemsintroductie}).
Indeed, if%
\begin{equation}
\frac{\partial \mu }{\partial X}-\frac{\mu }{\sigma }\frac{\partial \sigma }{%
\partial X}-\frac{1}{\sigma }\frac{\partial \sigma }{\partial t}-\frac{%
\sigma }{2}\frac{\partial ^{2}\sigma }{\partial ^{2}X}=\phi (t)
\label{integratieconditieFGintroduktie}
\end{equation}%
for some real function $\phi $, a global solution of (\ref{stochdifverg})
can be found in terms of a deterministic function $Z(t,Y_{t})$ of time $t$
and an Ito-process $Y_{t}$, given by a stochastic integral of the form%
\begin{equation}
Y(T,\omega )=y_{0}+\int_{0}^{T}F\left( t\right) dt+\int_{0}^{T}G\left(
t\right) dB_{t}.  \label{itoproces}
\end{equation}%
Here $G$ satisfies%
\begin{equation}
G(t)=\exp -\Phi (t),  \label{differentiaalvergelijkingG}
\end{equation}%
for some primitive of $\Phi $ of $\phi $. We observe that the integration
condition (\ref{integratieconditiemusigmaFG}) may be seen as a first-order
linear differential equation for the auxiliary function $G$ of the
Ito-process (\ref{stochasticdifferentialequationY}), i.e. 
\begin{equation}
\frac{dG}{dt}=\left( \frac{\mu }{\sigma }\frac{\partial \sigma }{\partial Z}+%
\frac{1}{\sigma }\frac{\partial \sigma }{\partial t}+\frac{\sigma }{2}\frac{%
\partial ^{2}\sigma }{\partial Z^{2}}-\frac{\partial \mu }{\partial Z}%
\right) G.  \label{differentiaalG}
\end{equation}

The choice of the function $F$ in (\ref{itoproces}) is free. The two
successive ordinary differential equations leading to $Z(T,Y_{T})$ take now
the form%
\begin{equation}
\left \{ 
\begin{array}{lll}
\frac{dZ}{dt} & = & \mu (t,Z)-\sigma (t,Z)\left( \frac{1}{2}\frac{\partial
\sigma (t,Z)}{\partial Z}+\frac{F(t)}{G(t)}\right) \\ 
Z(0) & = & x_{0}%
\end{array}%
\right.  \label{vergelijkinghmusigmaFG}
\end{equation}%
and%
\begin{equation}
\left \{ 
\begin{array}{lll}
\frac{dZ_{T}}{dx} & = & \frac{\sigma (T,Z_{T})}{G(T)} \\ 
Z_{T}(0) & = & Z_{0}(T),%
\end{array}%
\right.  \label{vergelijkingvmusigmaFG}
\end{equation}%
with $Z(T,Y_{T})=Z_{T}(Y_{T})$. It is of course possible to choose $F\equiv
0 $, and then (\ref{vergelijkinghmusigmaFG}) reduces to (\ref%
{Vergelijkinggewoon1}). However, it may be that a proper adjustment by $F$
makes the ordinary differential equation (\ref{vergelijkinghmusigmaFG})
easier to solve than (\ref{Vergelijkinggewoon1}). This will be illustrated
by the example of the Ornstein-Uhlenbeck process of Example \ref%
{ExampleOrnsteinUhlenbeck} of Section \ref{sectiestochastiek}.

If $G$ satisfies (\ref{differentiaalvergelijkingG}) the formula (\ref%
{integratieconditieFGintroduktie}) expresses the equality $\frac{\partial
^{2}Z(t,Y)}{\partial Y\partial t}=\frac{\partial ^{2}Z(t,Y)}{\partial
t\partial Y}$. Observe that if the function $G$ may be taken equal to $1$,
the integration condition (\ref{integratieconditieFGintroduktie}) reduces to
(\ref{integratieconditiemsintroductie}). Then the Ito-process $Y_{t}$
reduces to Brownian Motion, if $F$ is chosen to be identically $0$.

\section{Existence and uniqueness of solutions of a system of two first
order partial differential equations\label{sectieexistentie}}

Our approach is based on the existence and uniqueness of a solution for the
system of partial differential equations in three variables of the form%
\begin{equation}
\left \{ 
\begin{array}{l}
\frac{\partial Z}{\partial x}=f(x,y,Z) \\ 
\frac{\partial Z}{\partial y}=g(x,y,Z) \\ 
Z(x_{0},y_{0})=z_{0},%
\end{array}%
\right.  \label{systemfirstorderpartiali}
\end{equation}%
with initial condition in one single point $(x_{0},y_{0})\in \mathbb{R}^{2}$%
. If $Z=Z(x,y)$ is a solution of this system of class $C^{2}$, it follows
easily from the equality $\frac{\partial ^{2}Z(x,y)}{\partial y\partial x}=%
\frac{\partial ^{2}Z(x,y)}{\partial x\partial y}$ that$\ f$ and $g$ satisfy%
\begin{equation}
\frac{\partial f}{\partial y}+g\frac{\partial f}{\partial Z}-\frac{\partial g%
}{\partial x}-f\frac{\partial g}{\partial Z}=0.  \label{Integratieconditie}
\end{equation}%
This formula represents the "integration condition" or "compatibility
condition" of the system.

We use the following notations.

\begin{notation}
Let $f$ be a function of two variables $x$ and $y$. With some abuse of
language we may write $f(x,y)=f_{x}(y)$ if $x$ is temporarily fixed\ and $%
f(x,y)=f_{y}(x)$ if $y$ is temporarily fixed. We adopt an analogous
convention for functions of three variables.
\end{notation}

\begin{definition}
Let $z_{0}\in \mathbb{R}$. Let $f:\mathbb{R}^{3}\mathbb{\rightarrow R}$ and $%
g:\mathbb{R}^{3}\mathbb{\rightarrow R}$ be of class $C^{1}$, both uniformly
Lipschitz in the third variable. Consider the system of first-order partial
differential equations%
\begin{equation}
\left \{ 
\begin{array}{l}
\frac{\partial Z}{\partial x}=f(x,y,Z) \\ 
\frac{\partial Z}{\partial y}=g(x,y,Z) \\ 
Z(0,0)=z_{0}.%
\end{array}%
\right.  \label{systemfirstorderpartial}
\end{equation}%
We let $\widetilde{Z}:\mathbb{R}^{2}\rightarrow \mathbb{R}$ be defined by $%
\widetilde{Z}(\overline{x},\overline{y})=\widetilde{Z}_{\overline{x}}(%
\overline{y})$, where $\widetilde{Z}_{\overline{x}}$ satisfies the ordinary
differential equation%
\begin{equation}
\left \{ 
\begin{array}{lll}
\frac{d\widetilde{Z}_{\overline{x}}}{dy} & = & g_{\overline{x}}(y,\widetilde{%
Z}_{\overline{x}}(y)) \\ 
\widetilde{Z}_{\overline{x}}(0) & = & \widetilde{Z}_{0}(\overline{x}),%
\end{array}%
\right.  \label{vergelijkingvertikaal1}
\end{equation}%
with $\widetilde{Z}_{0}$ given by the ordinary differential equation%
\begin{equation}
\left \{ 
\begin{array}{lll}
\frac{d\widetilde{Z}_{0}}{dx} & = & f_{0}(x,\widetilde{Z}_{0}(x)) \\ 
\widetilde{Z}_{0}(0) & = & z_{0}.%
\end{array}%
\right.  \label{vergelijkinghorizontaal1}
\end{equation}
\end{definition}

The following theorem expresses conditions for the existence and uniqueness
of solutions of (\ref{systemfirstorderpartiali}).

\begin{theorem}
\emph{(Exact solution of systems of partial differential equations of first
order)}\label{stellingpartialfg} Let $z_{0}\in \mathbb{R}$. Let $f:\mathbb{R}%
^{3}\mathbb{\rightarrow R}$ be of class $C^{1}$ and $g:\mathbb{R}^{3}\mathbb{%
\rightarrow R}$ be of class $C^{1}$, both uniformly Lipschitz in the third
variable. Assume (\ref{Integratieconditie}) holds. Then $\widetilde{Z}$ is
solution of the system of first-order partial differential equations (\ref%
{systemfirstorderpartial}). As such it is unique and of class $C^{2}$.
\end{theorem}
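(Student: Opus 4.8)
The plan is to verify directly that the function $\widetilde{Z}$ constructed by the two successive ODEs satisfies both partial differential equations of the system, and then to invoke standard ODE uniqueness to conclude global uniqueness of the solution. First I would observe that the second equation $\frac{\partial \widetilde{Z}}{\partial y}=g(x,y,\widetilde{Z})$ holds \emph{by construction}: for each fixed $\overline{x}$, the function $\widetilde{Z}_{\overline{x}}$ is defined precisely as the solution of the ODE $\frac{d\widetilde{Z}_{\overline{x}}}{dy}=g_{\overline{x}}(y,\widetilde{Z}_{\overline{x}}(y))$, so differentiating $\widetilde{Z}(\overline{x},y)$ in $y$ reproduces $g$ immediately. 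The uniform Lipschitz hypothesis in the third variable guarantees these ODEs have unique solutions defined for all $y$, so $\widetilde{Z}$ is well-defined on all of $\mathbb{R}^2$.

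The substance of the proof is the first equation $\frac{\partial \widetilde{Z}}{\partial x}=f(x,y,\widetilde{Z})$, which does \emph{not} hold by construction and is where the integration condition (\ref{Integratieconditie}) must enter. My approach here is to introduce the auxiliary function
\begin{equation*}
W(x,y):=\frac{\partial \widetilde{Z}}{\partial x}(x,y)-f(x,y,\widetilde{Z}(x,y)),
\end{equation*}
and to show $W\equiv 0$. At $y=0$ we have $W(x,0)=0$, since along the horizontal base the ODE (\ref{vergelijkinghorizontaal1}) forces $\frac{\partial \widetilde{Z}}{\partial x}(x,0)=f_0(x,\widetilde{Z}_0(x))$. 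The idea is then to derive a linear first-order ODE in $y$ satisfied by $x\mapsto W(x,y)$ for each fixed $x$, with the zero initial condition at $y=0$, so that uniqueness of the trivial solution yields $W\equiv 0$. To obtain this ODE I would differentiate $W$ with respect to $y$: using the already-established second equation to substitute $\frac{\partial \widetilde{Z}}{\partial y}=g$, and crucially exchanging the order of the mixed partial $\frac{\partial}{\partial y}\frac{\partial \widetilde{Z}}{\partial x}=\frac{\partial}{\partial x}\frac{\partial \widetilde{Z}}{\partial y}=\frac{\partial}{\partial x}g(x,y,\widetilde{Z})$, one expands everything by the chain rule. After collecting terms, the combination $\frac{\partial f}{\partial y}+g\frac{\partial f}{\partial Z}-\frac{\partial g}{\partial x}-f\frac{\partial g}{\partial Z}$ appears; by (\ref{Integratieconditie}) this vanishes, and what remains is exactly $\frac{\partial W}{\partial y}=\frac{\partial g}{\partial Z}\,W$. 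This is the desired homogeneous linear ODE, whence $W\equiv 0$.

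The main obstacle I anticipate is the bookkeeping in that computation: one must justify the interchange of mixed partials (which requires knowing $\widetilde{Z}$ is $C^2$, itself following from the $C^1$ regularity of $f,g$ together with smooth dependence of ODE solutions on parameters and initial data), and then organize the chain-rule expansion carefully enough that the integration condition (\ref{Integratieconditie}) is seen to cancel precisely the obstructing terms, leaving only the benign $\frac{\partial g}{\partial Z}W$ term. Once $W\equiv 0$ is established, both PDEs hold and $\widetilde{Z}$ is a solution. Uniqueness then follows because any $C^2$ solution must agree with $\widetilde{Z}$ along the base $y=0$ (by uniqueness for (\ref{vergelijkinghorizontaal1})) and hence along every vertical line (by uniqueness for (\ref{vergelijkingvertikaal1})), so the solution is forced and in particular independent of the path of integration; its $C^2$ regularity is inherited from the regularity of $f$ and $g$ as above.
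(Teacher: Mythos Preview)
Your plan is correct and in fact more self-contained than the paper's treatment. The paper does not give a detailed proof of this theorem: it simply invokes the Frobenius theorem from differential geometry \cite{Boothby} for local existence and uniqueness, and then appeals to the uniform Lipschitz hypothesis to guarantee that the two ODEs (\ref{vergelijkinghorizontaal1}) and (\ref{vergelijkingvertikaal1}) have solutions on all of $\mathbb{R}$, so that $\widetilde{Z}$ is globally defined. Your argument, by contrast, gives a direct elementary verification: the auxiliary function $W=\frac{\partial\widetilde{Z}}{\partial x}-f$ satisfies the homogeneous linear ODE $\frac{\partial W}{\partial y}=\frac{\partial g}{\partial Z}\,W$ with $W(x,0)=0$, forcing $W\equiv 0$. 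This is the classical ``propagation of constraints'' proof underlying Frobenius-type results, and it has the virtue of staying entirely within ODE theory. The one delicate point, as you correctly flag, is justifying the interchange of mixed partials before $\widetilde{Z}$ is known to be $C^{2}$; the cleanest way around this is to obtain $\frac{\partial\widetilde{Z}}{\partial x}$ via the variational equation (smooth dependence of ODE solutions on parameters and initial data), which yields $\frac{\partial}{\partial y}\frac{\partial\widetilde{Z}}{\partial x}=\frac{\partial g}{\partial x}+\frac{\partial g}{\partial Z}\frac{\partial\widetilde{Z}}{\partial x}$ directly, without invoking Schwarz's theorem a priori. The paper's route buys brevity and situates the result in its natural geometric context; yours buys transparency and independence from external machinery.
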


The Theorem of Frobenius of Differential Geometry \cite{Boothby} implies
local existence and uniqueness of the solution. By the uniform Lipschitz
property the ordinary differential equations (\ref{vergelijkinghorizontaal1}%
) and (\ref{vergelijkingvertikaal1}) have existence and uniqueness of
solutions on any interval. Hence $\widetilde{Z}$ is well-defined and unique
everywhere.

Also, the value of $\widetilde{Z}$ at $(t,x)$ may be obtained by integrating
along any simple continuously differentiable curve, say, $\gamma $ going
from $(0,0)$ to $(t,x)$, i.e. by solving%
\begin{equation*}
\left \{ 
\begin{array}{lll}
\frac{dZ}{d\tau } & = & f((\gamma _{1}(\tau ),\gamma _{2}(\tau ),Z(\tau
))\gamma _{1}^{\prime }(\tau )+g((\gamma _{1}(\tau ),\gamma _{2}(\tau
),Z(\tau ))\gamma _{2}^{\prime }(\tau ) \\ 
Z(0) & = & z_{0}.%
\end{array}%
\right.
\end{equation*}

\section{Functional solutions of stochastic differential equations\label%
{sectiestochastiek}.}

Let $T>0$. To fix ideas, we will always work within an appropriate
probability space $(\Omega ,\mathcal{F},P)$, where $\Omega $ is a
sufficiently rich set, $\mathcal{F=}(\mathcal{F}_{t})_{t\in \lbrack 0,T]}$
is the natural filtration to the Standard Brownian Motion $B_{t}$ on $[0,T]$%
, and $P$ the probability associated to this Standard Brownian Motion. Let $%
\mu ,\sigma :$ $[0,T]\times \mathbb{R\rightarrow R}$ be measurable. For $%
t\in T$, $\omega \in \Omega $ and $x_{0}:\Omega \rightarrow \mathbb{R}$
measurable and of class $L^{2}$ we use often the notation of stochastic
differential equations%
\begin{equation}
\left \{ 
\begin{array}{lllll}
dX_{t} & = & \mu \left( t,X_{t}\right) dt+\sigma \left( t,X_{t}\right) dB_{t}
& \qquad & 0\leq t<T \\ 
X_{0} & = & x_{0} &  & 
\end{array}%
\right.  \label{stochasticdifferentialequation}
\end{equation}%
for the stochastic integral%
\begin{equation*}
X(T,\omega )=x_{0}(\omega )+\int_{0}^{T}\mu \left( t,X(t,\omega \right)
)dt+\int_{0}^{T}\sigma \left( t,X(t,\omega \right) )dB_{t}.
\end{equation*}

We recall that such a stochastic process $X_{t}$ is an \emph{Ito-process}
with respect to $B_{t}$ if it is of the form%
\begin{equation}
X(T,\omega )=x_{0}(\omega )+\int_{0}^{T}F\left( t,\omega \right)
dt+\int_{0}^{T}G\left( t,\omega \right) dB_{t},  \label{formuleitoproces}
\end{equation}%
where $x_{0}$ is $\mathcal{F}_{0}$-measurable, $F$ and $G$ are at any time $%
t $ adapted to $\mathcal{F}_{t}$, and $\int_{0}^{T}\left \vert F\left(
t,\omega \right) \right \vert dt$ and $\int_{0}^{T}\left \vert G\left(
t,\omega \right) \right \vert ^{2}dt$ exist almost surely (later on, for
reasons of simplicity, we will assume that $x_{0}$ is a constant).

For the sake of clarity we recall Ito's Lemma for stochastic processes which
are functions $Z(t,B_{t})$ of time and Brownian Motion and for stochastic
processes which are functions $Z(t,Y_{t})$ of time and a general Ito process 
$Y_{t}$.

\begin{theorem}
\emph{(Ito's Lemma, processes of the form }$Z(t,B_{t})$\emph{) }Let $T>0$
and let $Z:[0,T]\times \mathbb{R}\rightarrow \mathbb{R}$ be of class $C^{12}$%
. The stochastic process $Z(t,B_{t})$ is an Ito-process with respect to $%
B_{t}$ and satisfies for $0\leq t\leq T$ the stochastic differential equation%
\begin{equation*}
\left \{ 
\begin{array}{lll}
dZ_{t} & = & \left( \frac{\partial Z}{\partial t}+\frac{1}{2}\frac{\partial
^{2}Z}{\partial x^{2}}\right) dt+\frac{\partial Z}{\partial x}dB_{t}\text{ \
\  \  \  \  \  \  \  \  \  \  \  \  \  \  \  \  \  \  \  \ }0\leq t<T \\ 
Z_{0} & = & Z(0,x_{0}),%
\end{array}%
\right.
\end{equation*}%
where $x_{0}=B_{0}$.
\end{theorem}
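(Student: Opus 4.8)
The plan is to prove this version of Ito's Lemma directly from the general multidimensional Ito formula, treating $B_t$ as the one-dimensional Ito process with drift $F\equiv 0$ and diffusion $G\equiv 1$. Since $Z$ is assumed of class $C^{1,2}$ (written $C^{12}$, meaning once continuously differentiable in $t$ and twice in the space variable), all the partial derivatives appearing in the stated differential are continuous, so the standard Ito formula applies and yields the displayed expression. The main content is simply to identify the drift and diffusion coefficients correctly.

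First I would write down the general Ito formula for a function $Z(t,x)$ of class $C^{1,2}$ evaluated along an Ito process, which in differential form reads
\begin{equation*}
dZ(t,X_t)=\frac{\partial Z}{\partial t}\,dt+\frac{\partial Z}{\partial x}\,dX_t+\frac{1}{2}\frac{\partial^2 Z}{\partial x^2}\,d\langle X\rangle_t.
\end{equation*}
Next I would specialize to $X_t=B_t$. Here $dB_t$ is the stochastic differential of Brownian Motion, and the quadratic variation satisfies $d\langle B\rangle_t=dt$, which is the key computational fact encoding that $(dB_t)^2=dt$. Substituting these into the formula collapses the $dX_t$ term to $\frac{\partial Z}{\partial x}\,dB_t$ and the $d\langle X\rangle_t$ term to $\frac{1}{2}\frac{\partial^2 Z}{\partial x^2}\,dt$. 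Collecting the $dt$ terms then gives the asserted drift $\frac{\partial Z}{\partial t}+\frac{1}{2}\frac{\partial^2 Z}{\partial x^2}$ and the diffusion coefficient $\frac{\partial Z}{\partial x}$.

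To conclude that $Z(t,B_t)$ is genuinely an Ito-process in the sense of the definition at (\ref{formuleitoproces}), I would verify the integrability requirements: with $F(t,\omega)=\frac{\partial Z}{\partial t}+\frac{1}{2}\frac{\partial^2 Z}{\partial x^2}$ and $G(t,\omega)=\frac{\partial Z}{\partial x}$ evaluated at $(t,B_t(\omega))$, both are adapted to $\mathcal{F}_t$ because they are continuous functions of $t$ and the adapted process $B_t$; and since these partial derivatives are continuous on the compact-in-time strip $[0,T]\times\mathbb{R}$ and $B_t$ has almost surely continuous (hence locally bounded) paths, the pathwise integrals $\int_0^T|F|\,dt$ and $\int_0^T|G|^2\,dt$ are finite almost surely. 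The initial condition $Z_0=Z(0,x_0)$ with $x_0=B_0$ is immediate. The only genuine obstacle is that the statement as quoted asserts regularity merely of class $C^{1,2}$ rather than the often-assumed extra conditions guaranteeing that the stochastic integrals are true martingales; I expect the resolution to be exactly the observation just made, namely that under path continuity the local integrability conditions in the definition of an Ito-process hold automatically, so no further growth hypotheses on $Z$ are needed for the representation to be valid.
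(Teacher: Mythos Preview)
The paper does not actually prove this theorem; it is stated without proof as a recalled standard result (the paper writes ``For the sake of clarity we recall Ito's Lemma\ldots''). Your derivation --- specializing the general Ito formula with quadratic variation to $X_t=B_t$, then checking adaptedness and almost-sure local integrability of the resulting drift and diffusion coefficients via continuity --- is correct and is the standard way to obtain this special case, so there is nothing to compare against.
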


\begin{theorem}
\emph{(Ito's Lemma, processes of the form }$Z(t,Y_{t})$\emph{) }Let $T>0$
and let $Z:[0,T]\times \mathbb{R}\rightarrow \mathbb{R}$ be of class $C^{12}$%
. Let $Y_{t}$ be an Ito-process of the form (\ref{formuleitoproces}), with
initial condition $y_{0}$. The stochastic process $Z(t,Y_{t})$ is an
Ito-process and satisfies for $0\leq t\leq T$ the stochastic differential
equation
\end{theorem}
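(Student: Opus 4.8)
The plan is to establish the classical Ito formula for $Z(t,Y_{t})$ by the standard partition-and-Taylor-expansion argument, of which the Brownian case just recalled is the special instance $F\equiv 0$, $G\equiv 1$. First I would fix $t\in[0,T]$ and a partition $0=t_{0}<t_{1}<\cdots<t_{n}=t$ whose mesh tends to $0$, and write the telescoping decomposition $Z(t,Y_{t})-Z(0,Y_{0})=\sum_{k}\bigl[Z(t_{k+1},Y_{t_{k+1}})-Z(t_{k},Y_{t_{k}})\bigr]$. Writing $\Delta t_{k}=t_{k+1}-t_{k}$, $\Delta Y_{k}=Y_{t_{k+1}}-Y_{t_{k}}$ and $\Delta B_{k}=B_{t_{k+1}}-B_{t_{k}}$, I would apply Taylor's theorem to each increment, expanding to first order in $\Delta t_{k}$ and to second order in $\Delta Y_{k}$; this is legitimate because $Z$ is of class $C^{12}$.

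The expansion produces three principal sums together with remainders. The first, $\sum_{k}\frac{\partial Z}{\partial t}\Delta t_{k}$, converges to the Riemann integral $\int_{0}^{t}\frac{\partial Z}{\partial t}\,ds$ by continuity of the integrand along the almost-surely continuous paths of $Y$. The second, $\sum_{k}\frac{\partial Z}{\partial x}\Delta Y_{k}$, is a forward Riemann--Stieltjes approximation of the Ito integral $\int_{0}^{t}\frac{\partial Z}{\partial x}\,dY_{s}$, which by $dY_{s}=F\,ds+G\,dB_{s}$ splits into a drift integral $\int_{0}^{t}\frac{\partial Z}{\partial x}F\,ds$ and a genuine Ito integral $\int_{0}^{t}\frac{\partial Z}{\partial x}G\,dB_{s}$.

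The crux is the third sum, $\tfrac{1}{2}\sum_{k}\frac{\partial^{2}Z}{\partial x^{2}}(\Delta Y_{k})^{2}$, which is where the Ito correction term arises and which I expect to be the main obstacle. Here I would expand $(\Delta Y_{k})^{2}$ into the contributions $F^{2}(\Delta t_{k})^{2}$, $2FG\,\Delta t_{k}\Delta B_{k}$ and $G^{2}(\Delta B_{k})^{2}$, and argue that only the last survives: the quadratic-variation estimate $\sum_{k}G(t_{k})^{2}(\Delta B_{k})^{2}\to\int_{0}^{t}G^{2}\,ds$ holds in probability, while the pure-drift and cross contributions are of smaller order. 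Consequently $\tfrac{1}{2}\sum_{k}\frac{\partial^{2}Z}{\partial x^{2}}(\Delta Y_{k})^{2}\to\tfrac{1}{2}\int_{0}^{t}\frac{\partial^{2}Z}{\partial x^{2}}G^{2}\,ds$. The required convergences follow from the Ito isometry, the almost-sure continuity of the sample paths, and the integrability hypotheses $\int_{0}^{T}|F|\,dt<\infty$ and $\int_{0}^{T}|G|^{2}\,dt<\infty$ imposed on the Ito process.

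Finally I would verify that the Taylor remainders are negligible in the mesh limit: the second-order expansion leaves terms of order $(\Delta t_{k})^{2}$, $\Delta t_{k}\,\Delta Y_{k}$ and $|\Delta Y_{k}|^{3}$, all of whose sums vanish in probability by the same continuity and bounded-quadratic-variation arguments. Collecting the surviving terms gives $dZ_{t}=\bigl(\frac{\partial Z}{\partial t}+F\frac{\partial Z}{\partial x}+\tfrac{1}{2}G^{2}\frac{\partial^{2}Z}{\partial x^{2}}\bigr)dt+G\frac{\partial Z}{\partial x}\,dB_{t}$ with initial value $Z_{0}=Z(0,y_{0})$, which is the claimed stochastic differential equation.
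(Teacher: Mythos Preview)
Your outline is the standard partition-and-Taylor argument for Ito's formula and is essentially correct as a proof sketch. Note, however, that the paper does not prove this theorem at all: it is explicitly \emph{recalled} as a known result (``For the sake of clarity we recall Ito's Lemma\ldots'') and stated without proof, to be used as a tool in the proof of the Main Theorem. So there is no paper proof to compare against; you have supplied a proof where the paper simply cites the classical result.

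If you do wish to include a proof, your sketch is fine in spirit but would need some tightening to be a complete argument: the convergence of $\sum_{k}\frac{\partial Z}{\partial x}\,\Delta Y_{k}$ to the Ito integral requires that $\frac{\partial Z}{\partial x}(s,Y_{s})G(s,\omega)$ be a legitimate integrand (adapted, square-integrable), and the quadratic-variation step $\sum_{k}G(t_{k})^{2}(\Delta B_{k})^{2}\to\int_{0}^{t}G^{2}\,ds$ typically proceeds via a localization argument when $G$ is only assumed to satisfy $\int_{0}^{T}G^{2}\,dt<\infty$ almost surely rather than being bounded. These are routine refinements found in the references the paper already cites (\O ksendal, Protter), which is presumably why the paper omits the proof.
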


\begin{equation}
\left \{ 
\begin{array}{lll}
dZ_{t} & = & \left( \frac{\partial Z}{\partial t}+F\frac{\partial Z}{%
\partial x}+\frac{1}{2}G^{2}\frac{\partial ^{2}Z}{\partial x^{2}}\right) dt+G%
\frac{\partial Z}{\partial x}dB_{t} \\ 
Z_{0} & = & Z(0,y_{0}).%
\end{array}%
\right.  \label{stochasticdifferentialequationItoFG}
\end{equation}

The Main Theorem on the existence of global solutions of stochastic
differential equations in the form of deterministic functions of Ito
processes is as follows.

\begin{theorem}
\label{TheoremequivalenceFG}\emph{(Main Theorem) }Let $T>0$ and $x_{0}\in 
\mathbb{R}$. Let $\mu :[0,T]\times \mathbb{R\rightarrow R}$ be of class $%
C^{1}$, and $\sigma :[0,T]\times \mathbb{R\rightarrow R}^{+}\backslash \{0\}$
be of class $C^{2}$, both uniformly Lipschitz in the second variable, and
with $\frac{\partial \sigma }{\partial X}$ bounded. Assume 
\begin{equation}
\frac{\partial \mu }{\partial X}-\frac{\mu }{\sigma }\frac{\partial \sigma }{%
\partial X}-\frac{1}{\sigma }\frac{\partial \sigma }{\partial t}-\frac{%
\sigma }{2}\frac{\partial ^{2}\sigma }{\partial ^{2}X}=\phi (t)
\label{integratieconditiemusigmaFG}
\end{equation}%
for some real continuous function $\phi $. Let $\Phi (t)=\int_{0}^{T}\phi
(t)dt$ and $G(t)=\exp -\Phi (t)$. Let $F$ be a real function of class $C^{1}$
and $Y_{t}$ be the Ito process given by 
\begin{equation}
Y_{t}=y_{0}+\int_{0}^{t}F\left( s\right) ds+\int_{0}^{t}G\left( s\right)
dB_{s},  \label{stochasticdifferentialequationY}
\end{equation}%
where $y_{0}$ is some constant. Then there exists a unique function $%
Z:[0,T]\times \mathbb{R\rightarrow R}$ of class $C^{23}$ such that $%
Z(t,Y_{t})$ is an Ito-process with respect to $B_{t}$ satisfying (\ref%
{stochasticdifferentialequation}). In fact, for all $(\overline{t},\overline{%
x})\in \lbrack 0,T]\times \mathbb{R}$ the value $Z(\overline{t},\overline{x}%
) $ may be determined by solving successively the ordinary differential
equations%
\begin{equation}
\left \{ 
\begin{array}{lll}
\frac{d\widetilde{Z}}{dt} & = & \mu (t,\widetilde{Z})-\sigma (t,\widetilde{Z}%
)\left( \frac{1}{2}\frac{\partial \sigma (t,\widetilde{Z})}{\partial Z}+%
\frac{F(t)}{G(t)}\right) \\ 
\widetilde{Z}(0) & = & x_{0}%
\end{array}%
\right.  \label{vergelijkinghorizontaalmusigmaFG}
\end{equation}%
and%
\begin{equation}
\left \{ 
\begin{array}{lll}
\frac{d\widetilde{Z}_{\overline{t}}}{dx} & = & \frac{\sigma (\overline{t},%
\widetilde{Z}_{\overline{t}})}{G(\overline{t})} \\ 
\widetilde{Z}_{\overline{t}}(0) & = & \widetilde{Z}(\overline{t}),%
\end{array}%
\right.  \label{vergelijkingvertikaalmusigmaFG}
\end{equation}%
with $Z(\overline{t},\overline{x})=\widetilde{Z}_{\overline{t}}(\overline{x}%
) $.\newline
Conversely, if (\ref{stochasticdifferentialequation}) has a solution of
class $C^{23}$ of the form $Z(t,Y_{t})$, where $Y_{t}$ is given by (\ref%
{stochasticdifferentialequationY}), with $F$ and $G\neq 0$ of class $C^{1}$,
formula (\ref{integratieconditiemusigmaFG}) holds with $\phi (t)=-G^{\prime
}(t)/G(t)$.
\end{theorem}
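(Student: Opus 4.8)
The plan is to reduce the stochastic statement to the deterministic existence/uniqueness result already in hand (Theorem~\ref{stellingpartialfg}) and then verify the stochastic differential equation by Ito's Lemma. The main work is to choose the right pair of first-order coefficients $f$ and $g$ so that the compatibility condition (\ref{Integratieconditie}) for the system is exactly the hypothesis (\ref{integratieconditiemusigmaFG}), and so that the resulting Ito drift and diffusion match $\mu$ and $\sigma$.

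\medskip

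\noindent \textbf{Forward direction.} First I would set up the candidate system. Guided by the ordinary differential equations (\ref{vergelijkinghorizontaalmusigmaFG}) and (\ref{vergelijkingvertikaalmusigmaFG}), I take
\begin{equation*}
f(t,x,Z)=\mu(t,Z)-\sigma(t,Z)\left(\tfrac{1}{2}\tfrac{\partial \sigma(t,Z)}{\partial Z}+\tfrac{F(t)}{G(t)}\right),\qquad g(t,x,Z)=\frac{\sigma(t,Z)}{G(t)},
\end{equation*}
so that the horizontal integration (along $x=0$) gives $\widetilde Z$ of (\ref{vergelijkinghorizontaalmusigmaFG}) and the vertical integration gives $\widetilde Z_{\overline t}$ of (\ref{vergelijkingvertikaalmusigmaFG}). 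I would then check the regularity and uniform-Lipschitz hypotheses of Theorem~\ref{stellingpartialfg}: these follow from $\mu\in C^1$, $\sigma\in C^2$ with $\partial\sigma/\partial X$ bounded, $\sigma$ Lipschitz, $G$ continuous and bounded away from $0$, and $F,G$ of class $C^1$. The key computation is to substitute these $f,g$ into the compatibility condition (\ref{Integratieconditie}); after using $G'(t)=-\phi(t)G(t)$ (equivalently (\ref{differentiaalvergelijkingG})), the $F$-terms and the $G'/G$ terms should cancel and the identity should collapse exactly to the hypothesis (\ref{integratieconditiemusigmaFG}). Theorem~\ref{stellingpartialfg} then yields a unique $Z$ of class $C^2$ solving the system $\partial Z/\partial t=f$, $\partial Z/\partial x=g$ with $Z(0,0)=x_0$. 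To get the asserted $C^{23}$ regularity I would differentiate the system once more, using that $\mu\in C^1$ and $\sigma\in C^2$. Finally I would substitute $Y_t$ of (\ref{stochasticdifferentialequationY}) into Ito's Lemma for processes of the form $Z(t,Y_t)$, i.e. into (\ref{stochasticdifferentialequationItoFG}). From $\partial Z/\partial x=g=\sigma/G$ the diffusion coefficient becomes $G\cdot(\sigma/G)=\sigma(t,Z(t,Y_t))$, and the drift becomes
\begin{equation*}
\frac{\partial Z}{\partial t}+F\frac{\partial Z}{\partial x}+\frac{1}{2}G^2\frac{\partial^2 Z}{\partial x^2}
= f+F\cdot\frac{\sigma}{G}+\frac{1}{2}G^2\frac{\partial}{\partial x}\!\left(\frac{\sigma}{G}\right),
\end{equation*}
where I evaluate $\partial^2Z/\partial x^2$ by differentiating $g$ and using $\partial Z/\partial x=\sigma/G$ again. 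The $F\sigma/G$ term cancels the corresponding term inside $f$, and the second-derivative term supplies the $\tfrac12\sigma\,\partial\sigma/\partial Z$ that is subtracted in $f$; what remains is exactly $\mu(t,Z(t,Y_t))$. Thus $Z(t,Y_t)$ satisfies (\ref{stochasticdifferentialequation}), with $Z(0,Y_0)=Z(0,y_0)=x_0$ handled by the initial condition (after noting $Y_0=y_0$ and the normalization of the integration path).

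\medskip

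\noindent \textbf{Converse.} Here I would assume $Z(t,Y_t)$ of class $C^{23}$ solves (\ref{stochasticdifferentialequation}) with $Y_t$ as in (\ref{stochasticdifferentialequationY}) for given $C^1$ functions $F,G$ with $G\neq0$. Matching the diffusion term of Ito's Lemma (\ref{stochasticdifferentialequationItoFG}) to $\sigma$ forces $G\,\partial Z/\partial x=\sigma(t,Z)$, i.e. $\partial Z/\partial x=\sigma/G=g$; matching the drift then forces $\partial Z/\partial t=f$ with the $f$ above. Since $Z$ is $C^{23}$, the mixed partials of $Z$ are equal, so the compatibility identity (\ref{Integratieconditie}) holds for these $f,g$; unwinding that identity (again using $G'=-(G'/G)G$) reproduces (\ref{integratieconditiemusigmaFG}) with $\phi(t)=-G'(t)/G(t)$, as claimed.

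\medskip

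\noindent \textbf{Main obstacle.} The principal difficulty is the compatibility computation: one must show that inserting the above $f$ and $g$ into (\ref{Integratieconditie}) yields precisely (\ref{integratieconditiemusigmaFG}) and no extraneous terms. This requires correctly tracking several chain-rule and quotient-rule derivatives of $\sigma/G$ in the third variable and in $t$, and recognizing that the $F$-dependent contributions cancel identically (consistent with the remark that the choice of $F$ is free) while the $G$-dependence enters only through $G'/G=-\phi$. A secondary technical point is bookkeeping of the regularity: confirming that the hypotheses on $\mu,\sigma,F,G$ deliver both the $C^1$-plus-Lipschitz conditions needed to invoke Theorem~\ref{stellingpartialfg} and the higher smoothness $C^{23}$ of $Z$ required so that Ito's Lemma applies and the mixed partials in the converse may be interchanged.
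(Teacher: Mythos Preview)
Your plan mirrors the paper's proof almost exactly: the same choice of $f$ and $g$, the same verification that the compatibility identity (\ref{Integratieconditie}) collapses to (\ref{integratieconditiemusigmaFG}) (with the $F$-terms cancelling and $G'/G=-\phi$ supplying the right-hand side), the same appeal to Theorem~\ref{stellingpartialfg}, the same upgrade to $C^{23}$ by differentiating $\partial Z/\partial x=\sigma/G$, and the same Ito verification of (\ref{stochasticdifferentialequation}).

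The one point you underplay is the \emph{uniqueness} of $Z$. You cite the uniqueness clause of Theorem~\ref{stellingpartialfg}, but that only gives a unique solution of the PDE system (\ref{systemfirstorderpartial}); the Main Theorem asserts uniqueness among all $C^{23}$ functions $\zeta$ for which $\zeta(t,Y_t)$ solves the SDE. Matching Ito coefficients, as you do in the converse, yields $G\,\partial\zeta/\partial x=\sigma(t,\zeta)$ and the corresponding drift identity only along the random points $(t,Y_t(\omega))$, not on all of $[0,T]\times\mathbb{R}$, so you cannot immediately feed $\zeta$ into the PDE uniqueness statement. The paper closes this gap differently: it invokes pathwise uniqueness for the SDE to get $Z(t,Y_t(\omega))=\zeta(t,Y_t(\omega))$ almost surely, then observes that for each fixed $t>0$ the random variable $Y_t$ has support all of $\mathbb{R}$ (because $G$ is continuous and strictly positive, hence bounded away from zero on $[0,t]$), and finally uses continuity of $Z$ and $\zeta$ to conclude $Z\equiv\zeta$. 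You should add this step.
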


Observe that the Main Theorem expresses path-independence of the process $%
X=Z $ with respect to the process $Y$: if $\omega ,\omega ^{\prime }\in
\Omega $ are such that $Y_{t}(\omega )=Y_{t}(\omega ^{\prime })$ at some
time $t$, it holds that $X_{t}(\omega )=Z(t,Y_{t}(\omega ))=Z(t,Y_{t}(\omega
^{\prime }))=X_{t}(\omega ^{\prime })$. If $G^{\prime }\neq 0$ the process
does not have path-independence with respect to Brownian motion. Functional
dependence $Z(t,B_{t})$ on time and Brownian Motion is characterized by the
following corollary, with a simpler integration condition and simpler
ordinary differential equations for the function $Z$.

\begin{theorem}
\label{Theoremequivalence)}Let $T>0$ and $x_{0}\in \mathbb{R}$. Let $\mu
:[0,T]\times \mathbb{R\rightarrow R}$ be of class $C^{1}$, and $\sigma
:[0,T]\times \mathbb{R\rightarrow R}$ be of class $C^{12}$, both uniformly
Lipschitz in the second variable, and with $\frac{\partial \sigma }{\partial
X}$ bounded. Consider the stochastic differential equation (\ref%
{stochasticdifferentialequation}). Assume 
\begin{equation}
\sigma \frac{\partial \mu }{\partial X}-\mu \frac{\partial \sigma }{\partial
X}-\frac{\partial \sigma }{\partial t}-\frac{\sigma ^{2}}{2}\frac{\partial
^{2}\sigma }{\partial X^{2}}=0.  \label{partialdifferentialequationmusigma}
\end{equation}%
Then there exists a unique function $Z:[0,T]\times \mathbb{R\rightarrow R}$
of class $C^{23}$ such that $Z(t,B_{t})$ is an Ito-process with respect to $%
B_{t}$ satisfying (\ref{stochasticdifferentialequation}). In fact, for all $(%
\overline{t},\overline{x})\in \lbrack 0,T]\times \mathbb{R}$ the value $Z(%
\overline{t},\overline{x})$ may be determined by solving successively the
ordinary differential equations%
\begin{equation}
\left \{ 
\begin{array}{lll}
\frac{d\widetilde{Z}}{dt} & = & \mu (t,\widetilde{Z})-\frac{1}{2}\sigma (t,%
\widetilde{Z})\frac{\partial \sigma (t,\widetilde{Z})}{\partial Z} \\ 
\widetilde{Z}(0) & = & x_{0}%
\end{array}%
\right.  \label{vergelijkinghorizontaalmusigma}
\end{equation}%
and%
\begin{equation}
\left \{ 
\begin{array}{lll}
\frac{d\widetilde{Z}_{\overline{t}}}{dx} & = & \sigma (\overline{t},%
\widetilde{Z}_{\overline{t}}) \\ 
\widetilde{Z}_{\overline{t}}(0) & = & \widetilde{Z}(\overline{t}),%
\end{array}%
\right.  \label{vergelijkingvertikaalmusigma}
\end{equation}%
with $Z(\overline{t},\overline{x})=\widetilde{Z}_{\overline{t}}(\overline{x}%
) $.\newline
Conversely, if (\ref{stochasticdifferentialequation}) has a solution of
class $C^{23}$ of the form $Z(t,B_{t})$, formula (\ref%
{partialdifferentialequationmusigma}) holds.
\end{theorem}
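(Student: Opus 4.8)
The plan is to read this statement as the specialization of the Main Theorem (Theorem~\ref{TheoremequivalenceFG}) to the choice $G \equiv 1$ and $F \equiv 0$, for which the auxiliary Ito process (\ref{stochasticdifferentialequationY}) collapses to $Y_t = y_0 + B_t$; taking $y_0 = 0$ gives $Y_t = B_t$. With $G \equiv 1$ the relation $\phi(t) = -G'(t)/G(t)$ forces $\phi \equiv 0$, so the integration condition (\ref{integratieconditiemusigmaFG}) becomes $\partial\mu/\partial X - (\mu/\sigma)\,\partial\sigma/\partial X - (1/\sigma)\,\partial\sigma/\partial t - (\sigma/2)\,\partial^2\sigma/\partial X^2 = 0$, and multiplying through by $\sigma$ yields exactly (\ref{partialdifferentialequationmusigma}). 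The same substitution $F \equiv 0$, $G \equiv 1$ turns (\ref{vergelijkinghorizontaalmusigmaFG}) into (\ref{vergelijkinghorizontaalmusigma}) and (\ref{vergelijkingvertikaalmusigmaFG}) into (\ref{vergelijkingvertikaalmusigma}). For the converse I would invoke the converse half of Theorem~\ref{TheoremequivalenceFG} with $G \equiv 1$: a solution of the form $Z(t,B_t) = Z(t,Y_t)$ forces (\ref{integratieconditiemusigmaFG}) with $\phi = -G'/G = 0$, and multiplying by $\sigma$ gives (\ref{partialdifferentialequationmusigma}).

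This appeal is, however, only legitimate where $\sigma$ does not vanish, since recovering (\ref{partialdifferentialequationmusigma}) from (\ref{integratieconditiemusigmaFG}) involves dividing by $\sigma$, whereas the present statement only assumes $\sigma$ real-valued. To cover the general case cleanly (and because (\ref{partialdifferentialequationmusigma}) is itself free of any division by $\sigma$), I would instead give the self-contained argument that mirrors the proof of the Main Theorem but never divides by $\sigma$. By Ito's Lemma for processes of the form $Z(t,B_t)$, the requirement that $X_t = Z(t,B_t)$ satisfy (\ref{stochasticdifferentialequation}) is, by uniqueness of the Ito decomposition, equivalent to the two identities $\partial_x Z = \sigma(t,Z)$ and $\partial_t Z + \tfrac12\partial_{xx}Z = \mu(t,Z)$ together with $Z(0,0) = x_0$. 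Differentiating the first identity in $x$ gives $\partial_{xx}Z = \sigma\,\partial_Z\sigma$, so the second becomes $\partial_t Z = \mu - \tfrac12\sigma\,\partial_Z\sigma$; thus the pair is exactly the first-order system (\ref{systeemz}).

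I would then record that this is a system of the form (\ref{systemfirstorderpartial}) with, identifying $x \leftrightarrow t$ and $y \leftrightarrow x$, the coefficients $f = \mu - \tfrac12\sigma\,\partial_Z\sigma$ and $g = \sigma$, and compute its compatibility condition (\ref{Integratieconditie}). A short calculation, in which the two terms $\pm\tfrac12\sigma(\partial_Z\sigma)^2$ cancel, shows that (\ref{Integratieconditie}) for this $(f,g)$ is precisely (\ref{partialdifferentialequationmusigma}). Invoking Theorem~\ref{stellingpartialfg} then yields a unique $C^2$ solution $Z$ of (\ref{systeemz}), obtainable by integrating along any curve from $(0,0)$; integrating first in $t$ and then in $x$ reproduces the two ordinary differential equations (\ref{vergelijkinghorizontaalmusigma}) and (\ref{vergelijkingvertikaalmusigma}). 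Running Ito's Lemma forward shows that this $Z$ makes $Z(t,B_t)$ solve (\ref{stochasticdifferentialequation}), and the converse follows by equating the commuting mixed partials $\partial_t\partial_x Z = \partial_x\partial_t Z$ of any $C^2$ solution, which again reproduces (\ref{partialdifferentialequationmusigma}).

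The routine algebra (the cancellation in the compatibility computation and the two passes through Ito's Lemma) is not where the difficulty lies. The delicate points are verifying the hypotheses of Theorem~\ref{stellingpartialfg}, namely that $f = \mu - \tfrac12\sigma\,\partial_Z\sigma$ is of class $C^1$ and uniformly Lipschitz in its third argument — this is where $\mu \in C^1$, $\sigma \in C^{12}$ and the boundedness of $\partial\sigma/\partial X$ are consumed, and it is the main obstacle, since controlling $\partial_Z(\sigma\,\partial_Z\sigma) = (\partial_Z\sigma)^2 + \sigma\,\partial_Z^2\sigma$ is the awkward step; and upgrading the $C^2$ solution supplied by Theorem~\ref{stellingpartialfg} to the claimed $C^{23}$ regularity. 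The latter I would handle by bootstrapping in $\partial_x Z = \sigma(t,Z)$, where $Z$ gains a space-derivative from the $C^2$ dependence of $\sigma$ in its second slot, and then in $\partial_t Z = \mu - \tfrac12\sigma\,\partial_Z\sigma$ to obtain the two time-derivatives.
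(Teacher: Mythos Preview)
Your first paragraph is exactly the paper's approach: the theorem is presented there as the corollary of the Main Theorem obtained by taking $F\equiv 0$, $G\equiv 1$, $y_0=0$ (so that $Y_t=B_t$ and $\phi\equiv 0$), and no separate proof is given. Your subsequent self-contained argument --- reworking the Main Theorem's proof directly with $f=\mu-\tfrac12\sigma\,\partial_Z\sigma$ and $g=\sigma$, verifying the compatibility condition (\ref{Integratieconditie}) without ever dividing by $\sigma$, and then bootstrapping regularity --- is actually more careful than the paper, which simply does not address the hypothesis discrepancy you noticed (the Main Theorem assumes $\sigma:[0,T]\times\mathbb{R}\to\mathbb{R}^+\setminus\{0\}$, whereas the present statement allows $\sigma$ merely real-valued). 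One minor slip of phrasing: passing from (\ref{integratieconditiemusigmaFG}) with $\phi=0$ to (\ref{partialdifferentialequationmusigma}) is a multiplication by $\sigma$, not a division; but your underlying point stands, since (\ref{integratieconditiemusigmaFG}) itself contains $1/\sigma$ terms and the Main Theorem's hypotheses explicitly require $\sigma>0$, so the corollary route is only available under that extra assumption.
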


\begin{proof}[Proof of the Main Theorem]
Let $f$ $:[0,T]\times \mathbb{R\times R\rightarrow R}$ and $g$ $:[0,T]\times 
\mathbb{R\times R\rightarrow R}$ be defined by 
\begin{equation}
\left \{ 
\begin{array}{lll}
f(t,x,Z) & = & \mu (t,Z)-\frac{1}{2}\sigma (t,Z)\frac{\partial \sigma (t,Z)}{%
\partial Z}-\frac{F(t)}{G(t)}\sigma (t,Z) \\ 
g(t,x,Z) & = & \frac{\sigma (t,Z)}{G(t)}.%
\end{array}%
\right.  \label{definitiefg}
\end{equation}%
Then $f$ is of class $C^{1\infty 1}$ and $g$ is of class $C^{1\infty 2}$ and
both are uniformly Lipschitz in the third variable. Consider the system of
two partial differential equations%
\begin{equation}
\left \{ 
\begin{array}{lll}
\frac{\partial Z}{\partial t} & = & f(t,x,Z) \\ 
\frac{\partial Z}{\partial x} & = & g(t,x,Z) \\ 
Z(0,0) & = & x_{0}.%
\end{array}%
\right.  \label{systeemZfgFG}
\end{equation}%
The conditions for the integration of the system (\ref{systeemZfgFG}) are
satisfied, since%
\begin{eqnarray*}
&&\frac{\partial f}{\partial x}+g\frac{\partial f}{\partial Z}-\frac{%
\partial g}{\partial t}-f\frac{\partial g}{\partial Z} \\
&=&\frac{\sigma }{G}\frac{\partial \left( \mu -\frac{1}{2}\sigma \frac{%
\partial \sigma }{\partial Z}-\frac{F}{G}\sigma )\right) }{\partial Z}-\frac{%
\partial (\sigma /G)}{\partial t}-\left( \mu -\frac{1}{2}\sigma \frac{%
\partial \sigma }{\partial Z}-\frac{F}{G}\sigma \right) \frac{\partial
(\sigma /G)}{\partial Z} \\
&=&\frac{1}{G}\left( \sigma \frac{\partial \mu }{\partial Z}-\mu \frac{%
\partial \sigma }{\partial Z}-\frac{\partial \sigma }{\partial t}-\frac{%
\sigma ^{2}}{2}\frac{\partial ^{2}\sigma }{\partial Z^{2}}+\frac{G^{\prime }%
}{G}\sigma \right) \\
&=&0.
\end{eqnarray*}%
By Theorem \ref{stellingpartialfg} the system (\ref{systeemZfgFG}) has a
solution $Z:[0,T]\times \mathbb{R\rightarrow R}$ at least of class $C^{2}$.
It follows from the identities%
\begin{equation*}
\frac{\partial ^{2}Z}{\partial x^{2}}=\frac{\sigma (t,Z(t,x))}{G^{2}(t)}%
\frac{\partial \sigma (t,Z(t,x))}{\partial Z}
\end{equation*}%
and 
\begin{equation*}
\frac{\partial ^{3}Z}{\partial x^{3}}=\frac{\sigma (t,Z(t,x))}{G^{3}(t)}%
\left( \frac{\partial \sigma (t,Z(t,x))}{\partial Z}\right) ^{2}+\frac{%
\sigma ^{2}(t,Z(t,x))}{G^{3}(t)}\frac{\partial ^{2}\sigma (t,Z(t,x))}{%
\partial Z^{2}}
\end{equation*}%
that the solution $Z$ is in fact of class $C^{23}$. One verifies that $Z$
satisfies 
\begin{equation}
\left \{ 
\begin{array}{lll}
\frac{\partial Z}{\partial t}+F\frac{\partial Z}{\partial x}+\frac{1}{2}G^{2}%
\frac{\partial ^{2}Z}{\partial x^{2}} & = & \mu (t,Z) \\ 
\frac{\partial Z}{\partial x} & = & \frac{\sigma (t,Z)}{G(t)} \\ 
Z(0,0) & = & x_{0},%
\end{array}%
\right.  \label{systeem2ZfgFG}
\end{equation}%
and then by Ito's Lemma the Ito-process $Z_{t}\equiv Z(t,Y_{t})$ satisfies
the stochastic differential equation%
\begin{equation*}
\left \{ 
\begin{array}{lll}
dZ_{t} & = & \left( \frac{\partial Z}{\partial t}+F\frac{\partial Z}{%
\partial x}+\frac{1}{2}G^{2}\frac{\partial ^{2}Z}{\partial x^{2}}\right) dt+G%
\frac{\partial Z}{\partial x}dB_{t} \\ 
Z_{0} & = & x_{0}.%
\end{array}%
\right.
\end{equation*}%
By (\ref{systeem2ZfgFG}) it satisfies also the stochastic differential
equation (\ref{stochasticdifferentialequation}).

As for uniqueness, we observe first that by Theorem \ref{stellingpartialfg}
the function $Z$ is the unique solution of class $C^{2}$ (in fact of class $%
C^{23}$) of the system (\ref{systeemZfgFG}). For fixed $t>0$ let $\zeta
:[0,T]\times \mathbb{R\rightarrow R}$ be of class $C^{23}$ such that $\zeta
(t,Y_{t})$ is an Ito-process satisfying (\ref{stochasticdifferentialequation}%
). By the Existence-Uniqueness Theorem for stochastic differential equations 
\cite{Oksendahl}, almost surely%
\begin{equation*}
\sup_{0\leq t\leq T}\left \vert Z_{t}-\zeta _{t}\right \vert =0.
\end{equation*}%
Hence $Z(t,Y_{t}(\omega ))=\zeta (t,Y_{t}(\omega ))$ almost surely. For $t>0$
the range of the stochastic variable $B_{t}$ is the whole of $\mathbb{R}$,
so because the positive and continuous function $G$ is has a non-zero lower
bound on $[0,t]$, the range of the stochastic variable $\int_{0}^{t}G\left(
s\right) dB_{s}$ is also the whole of $\mathbb{R}$, hence because $F$ is
bounded on $[0,t]$ the range of the stochastic variable $Y_{t}=y_{0}+%
\int_{0}^{t}F\left( s\right) ds+\int_{0}^{t}G\left( s\right) dB_{s}$ is also
the whole of $\mathbb{R}$. Hence $Z(t,x)=\zeta (t,x)$ almost surely for $%
x\in \mathbb{R}$ with respect to the measure on $\mathbb{R}$ induced by $%
\mathcal{F}$ and $P$. By continuity of $Z$ and $\zeta $ we have $%
Z(t,x)=\zeta (t,x)$ for all $(t,x)\in (0,T]\times \mathbb{R}$, and since $%
Z(0,x)=\zeta (0,x)=x_{0}$, in fact for all $(t,x)\in \lbrack 0,T]\times 
\mathbb{R}$. Hence $Z=\zeta $.

The converse follows from the equality $\frac{\partial ^{2}Z(t,Y)}{\partial
Y\partial t}=\frac{\partial ^{2}Z(t,Y)}{\partial t\partial Y}$.
\end{proof}

\begin{theorem}
\label{stelling distributiefunctie}Assume the conditions of Theorem \ref%
{TheoremequivalenceFG} are satisfied. Let $Y_{t}$ be an Ito-process such
that $Z(t,Y_{t})$ solves the stochastic differential equation (\ref%
{stochasticdifferentialequation}) Let $0<t\leq T$ and $D$ be the cumulative
distribution function of $Y_{t}$. Then for all $x\in \mathbb{R}$%
\begin{equation*}
\Pr \left \{ X_{t}\leq x\right \} =D(Z_{t}^{-1}(x)).
\end{equation*}%
In particular, if $Y_{t}=B_{t}$%
\begin{equation*}
\Pr \left \{ X_{t}\leq x\right \} =\mathcal{N}(Z_{t}^{-1}(x)).
\end{equation*}
\end{theorem}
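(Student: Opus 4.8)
The plan is to exploit that, under the hypotheses of the Main Theorem, $X_t$ is a deterministic and strictly monotone transformation of $Y_t$, so that the law of $X_t$ is recovered from the law of $Y_t$ by the usual change of variables for monotone maps. First I would record that, by hypothesis, $X_t = Z(t,Y_t) = Z_t(Y_t)$ almost surely, where $Z_t = Z(t,\cdot)$ denotes the section at the fixed time $t$ of the solution surface furnished by Theorem \ref{TheoremequivalenceFG}; this section is of class $C^3$ in its spatial argument.

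The only genuinely analytic point is to show that $Z_t:\mathbb{R}\rightarrow \mathbb{R}$ is a strictly increasing bijection, so that $Z_t^{-1}$ is well-defined on all of $\mathbb{R}$. Strict monotonicity is immediate from the spatial equation of the system (\ref{systeem2ZfgFG}), namely $\frac{\partial Z}{\partial x}=\frac{\sigma(t,Z)}{G(t)}$: since $\sigma$ takes values in $\mathbb{R}^{+}\backslash \{0\}$ and $G(t)=\exp-\Phi(t)>0$, the derivative $\frac{\partial Z}{\partial x}$ is strictly positive, whence $Z_t$ is strictly increasing. For surjectivity I would argue by contradiction along the spatial ordinary differential equation (\ref{vergelijkingvertikaalmusigmaFG}): if $Z_t(x)$ converged to a finite limit $L$ as $x\rightarrow +\infty$, then $\frac{dZ_t}{dx}=\frac{\sigma(t,Z_t)}{G(t)}$ would tend to $\frac{\sigma(t,L)}{G(t)}>0$, forcing $Z_t(x)\rightarrow +\infty$, a contradiction; the symmetric argument at $-\infty$ gives $Z_t(x)\rightarrow -\infty$. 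Hence $Z_t$ maps $\mathbb{R}$ onto $\mathbb{R}$ and admits a continuous, strictly increasing inverse $Z_t^{-1}$.

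With the inverse in hand the computation is formal. Because $Z_t$ is strictly increasing, for each fixed $x\in \mathbb{R}$ the events coincide, $\{X_t\leq x\}=\{Z_t(Y_t)\leq x\}=\{Y_t\leq Z_t^{-1}(x)\}$, and taking probabilities together with the fact that $D$ is the cumulative distribution function of $Y_t$ yields $\Pr\{X_t\leq x\}=\Pr\{Y_t\leq Z_t^{-1}(x)\}=D(Z_t^{-1}(x))$, which is the asserted identity. The special case is then a direct specialization: when $Y_t=B_t$ (so that $F\equiv 0$ and $G\equiv 1$), the distribution function $D$ of $Y_t$ is the normal distribution function $\mathcal{N}$, and the general formula becomes $\Pr\{X_t\leq x\}=\mathcal{N}(Z_t^{-1}(x))$.

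I expect the surjectivity of $Z_t$ onto $\mathbb{R}$ to be the main obstacle, as it is the sole step that is not a purely formal manipulation. It rests essentially on the strict positivity of $\sigma$ together with the at-most-linear spatial growth guaranteed by the boundedness of $\frac{\partial \sigma}{\partial X}$, which jointly prevent the spatial ordinary differential equation from either blowing up in finite space or stalling at a finite limit; everything else reduces to the standard monotone-transformation argument for distribution functions.
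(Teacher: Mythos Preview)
Your proposal is correct and follows essentially the same approach as the paper: the paper simply declares the result ``obvious, noting that $Z^{-1}(t,Y_{t})$ is well-defined for fixed $t$, since $\frac{\partial Z}{\partial x}=\frac{\sigma (t,Z)}{G(t)}$ is always positive.'' You supply the details the paper omits, in particular the surjectivity of $Z_{t}$ onto $\mathbb{R}$, which strict positivity of the derivative alone does not give; your contradiction argument along the spatial ODE, together with the global existence coming from the Lipschitz bound on $\sigma$, fills this gap cleanly.
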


The proof is obvious, noting that $Z^{-1}(t,Y_{t})$ is well-defined for
fixed $t$, since $\frac{\partial Z}{\partial x}=\frac{\sigma (t,Z)}{G(t)}$
is always positive.

\bigskip

\textbf{Remarks.}

\begin{enumerate}
\item In \cite{vandenberglobry} the problem of path-independence was studied
in a discrete setting from an asymptotic point-of-view. Roughly spoken,
solutions of stochastic difference equations 
\begin{equation}
\delta X_{t}=\mu (t,X_{t})\delta t+\sigma (t,X_{t})\delta W_{t},
\label{stochasticdifferenceequation}
\end{equation}%
where $\delta W_{t}=\pm \sqrt{\delta t}$ is the Wiener Walk and $\delta
t\rightarrow 0$, happen to have in the limit the same probability
distribution as a deterministic function $Z(t,W_{t})$ - i.e. some deformed
Normal Distribution like in Theorem \ref{stelling distributiefunctie} -
provided (\ref{partialdifferentialequationmusigma}) holds. The condition (%
\ref{partialdifferentialequationmusigma}) expresses a form of near
path-dependence on microscopic level. Observe that an upward movement $%
\delta W_{t}=+\sqrt{\delta t}$ followed by a downward movement $\delta
W_{t}=-\sqrt{\delta t}$ yields the same value as a downward movement $\delta
W_{t}=-\sqrt{\delta t}$ followed by an upward movement $\delta W_{t}=+\sqrt{%
\delta t}$. This is not true for a general process $X_{t}$ given by (\ref%
{stochasticdifferenceequation}), but if (\ref%
{partialdifferentialequationmusigma}) holds the values of an upward movement
followed by a downward movement and a downward movement followed by an
upward movement happen to be sufficiently close to permit the above limit
property for its probability distribution. The property follows by applying
Taylor-expansions to the increments $\delta X_{t}$.

\item The second-order integration condition (\ref%
{integratieconditiemusigmaFG}) may be simplified and also be solved for $\mu 
$. Firstly, put%
\begin{equation}
\nu (t,X)=\mu (t,X)-\frac{1}{2}\sigma (t,X)\frac{\partial \sigma (t,X)}{%
\partial X}.  \label{integratieconditiestratonovich}
\end{equation}%
Then (\ref{partialdifferentialequationmusigma}) becomes the first-order
linear partial differential equation%
\begin{equation}
\sigma \frac{\partial \nu }{\partial X}-\frac{\partial \sigma }{\partial X}%
\nu =\frac{\partial \sigma }{\partial t}+\phi (t).
\label{Integratieconditiemuessigma}
\end{equation}%
When solved for $\nu $, with $\mu (t,X)=\nu (t,X)+\frac{1}{2}\sigma (t,X)%
\frac{\partial \sigma (t,X)}{\partial X}$ one finds%
\begin{equation}
\mu (t,X)=\left( \frac{1}{2}\frac{\partial \sigma (t,X)}{\partial X}%
+\int \limits_{0}^{X}\frac{\frac{\partial \sigma (t,\xi )}{\partial t}+\phi
(t)}{\sigma ^{2}(t,\xi )}d\xi +\gamma (t)\right) \sigma (t,X),
\label{Integratieconditiemu}
\end{equation}%
for some function $\gamma $ of class $C^{1}$.

\item Some special cases lead to simplifications of the integration
condition (\ref{Integratieconditiemu}). For (\ref%
{stochasticdifferentialequation}) to have solutions of the form $Z(t,B_{t})$
one has $\phi =0$ and (\ref{Integratieconditiemu}) reduces to 
\begin{equation}
\mu (t,X)=\left( \frac{1}{2}\frac{\partial \sigma (t,X)}{\partial X}%
+\int \limits_{0}^{X}\frac{\frac{\partial \sigma (t,\xi )}{\partial t}}{%
\sigma ^{2}(t,\xi )}d\xi +\gamma (t)\right) \sigma (t,X).
\label{Integratieconditiemupadonafhankelijk}
\end{equation}%
In the autonomous case one also has $\phi =0$ and eliminating all dependence
on $t$ in (\ref{Integratieconditiemupadonafhankelijk}) one finds%
\begin{equation}
\mu (X)=\left( \frac{1}{2}\sigma ^{\prime }(X)+c\right) \sigma (X)
\label{Integratieconditiemuautonoom}
\end{equation}%
for some constant $c$. Moreover, if $\sigma $ is linear, $\mu $ is must also
be linear.
\end{enumerate}

\textbf{Examples.}

\begin{enumerate}
\item \textbf{Autonomous case.} Consider the stochastic differential equation%
\begin{equation*}
\left \{ 
\begin{array}{lllll}
dX_{t} & = & \mu (X_{t})dt+\sigma (X_{t})dB_{t} & \qquad & 0\leq t<T \\ 
X_{0} & = & x_{0}, &  & 
\end{array}%
\right.
\end{equation*}%
with $x_{0}\in \mathbb{R}$, and $\mu $ of class $C^{1}$ and $\sigma \neq 0$
of class $C^{2}$. We saw that for functional solutions of the form $%
Z(t,Y_{t})$, where $Y_{t}$ is an Ito Process of the form (\ref%
{stochasticdifferentialequationY}), in fact $Y_{t}$ is equal to $B_{t}$, and
that the trend $\mu $ must satisfy (\ref{Integratieconditiemuautonoom}). If
the remaining conditions of Theorem \ref{Theoremequivalence)} are satisfied
the function $Z$ may be determined by means of the equations (\ref%
{vergelijkinghorizontaalmusigma}) and (\ref{vergelijkingvertikaalmusigma}),
which become the differential equations with separable variables%
\begin{equation}
\left \{ 
\begin{array}{lll}
\frac{d\widetilde{Z}}{dt} & = & c\sigma (\widetilde{Z}) \\ 
\widetilde{Z}(0) & = & x_{0}%
\end{array}%
\right.  \label{vergelijkinghorizontaalautonoom}
\end{equation}%
and%
\begin{equation}
\left \{ 
\begin{array}{lll}
\frac{d\widetilde{Z}_{\overline{t}}}{dx} & = & \sigma (\widetilde{Z}_{%
\overline{t}}) \\ 
\widetilde{Z}_{\overline{t}}(0) & = & \widetilde{Z}(\overline{t}).%
\end{array}%
\right.  \label{vergelijkingvertikaalautonoom}
\end{equation}%
A well-known special case is the Geometric Brownian Motion. It satisfies the
stochastic differential equation%
\begin{equation*}
\left \{ 
\begin{array}{lllll}
dX_{t} & = & \widehat{\mu }X_{t}dt+\widehat{\sigma }X_{t}dB_{t} & \qquad & 
0\leq t<T \\ 
X_{0} & = & x_{0}, &  & 
\end{array}%
\right.
\end{equation*}%
with $\widehat{\mu }\in \mathbb{R}$ and $\widehat{\sigma }>0$. One has $c=(%
\widehat{\mu }-\widehat{\sigma }^{2}/2)/\widehat{\sigma }$, hence the
solution of (\ref{vergelijkinghorizontaalautonoom}) is $Z(\overline{t}%
)=x_{0}\exp (\widehat{\mu }-\widehat{\sigma }^{2}/2)\overline{t}$ (this
formula is perhaps most easily found applying (\ref%
{vergelijkinghorizontaalmusigma}) directly). Then the solution of (\ref%
{vergelijkingvertikaalautonoom}) is $Z_{\overline{t}}(\overline{x}%
)=x_{0}\exp \left( (\widehat{\mu }-\widehat{\sigma }^{2}/2)\overline{t}+%
\widehat{\sigma }\overline{x}\right) $ and one derives the well-known
formula $Z(t,B_{t})$\linebreak $=x_{0}\exp \left( (\widehat{\mu }-\widehat{%
\sigma }^{2}/2)t+\widehat{\sigma }B_{t}\right) $.

\item \label{ExampleOrnsteinUhlenbeck}\textbf{Ornstein-Uhlenbeck process. }%
This process is given by the stochastic differential equation%
\begin{equation*}
\left \{ 
\begin{array}{lllll}
dR_{t} & = & \theta (\widehat{\mu }-R_{t})dt+\widehat{\sigma }dB_{t} & \qquad
& 0\leq t<T \\ 
R_{0} & = & r_{0}, &  & 
\end{array}%
\right.
\end{equation*}%
where $\theta ,\widehat{\mu },\widehat{\sigma }\neq 0$ and $r_{0}$ are all
constants. With $\mu (t,R_{t})=\theta (\widehat{\mu }-R_{t})$ and $\sigma
(t,R_{t})=\widehat{\sigma }$ the integration condition (\ref%
{integratieconditiemusigmaFG}) becomes%
\begin{equation*}
\frac{\partial \mu }{\partial R}-\frac{\mu }{\sigma }\frac{\partial \sigma }{%
\partial R}-\frac{1}{\sigma }\frac{\partial \sigma }{\partial t}-\frac{%
\sigma }{2}\frac{\partial ^{2}\sigma }{\partial ^{2}R}=-\theta .
\end{equation*}%
Hence $G(t)=ce^{\theta t}$, for some $c\in \mathbb{R}$. By Theorem \ref%
{TheoremequivalenceFG} one has $R_{t}=Z(t,Y_{t})$, where $Y_{t}$ is of the
form 
\begin{equation*}
Y_{t}=y_{0}+\int_{0}^{t}F\left( s\right) ds+c\int_{0}^{t}e^{\theta s}dB_{s};
\end{equation*}%
here $y_{0}\in \mathbb{R}$, $F$ is of class $C^{1}$ on $[0,T]$, and $Z$ is
of class $C^{23}$ and is determined by solving successively the auxiliary
differential equations%
\begin{equation}
\left \{ 
\begin{array}{lll}
\frac{d\widetilde{Z}}{ds} & = & \theta (\widehat{\mu }-\widetilde{Z})-%
\widehat{\sigma }\left( 0+\frac{F(s)}{ce^{\theta s}}\right) =\theta \widehat{%
\mu }-\frac{\widehat{\sigma }}{c}F(s)e^{-\theta s}-\theta \widetilde{Z} \\ 
\widetilde{Z}(0) & = & r_{0}%
\end{array}%
\right.  \label{OU1}
\end{equation}%
and%
\begin{equation}
\left \{ 
\begin{array}{lll}
\frac{d\widetilde{Z}_{t}}{dx} & = & \frac{\widehat{\sigma }}{ce^{\theta t}}=%
\frac{\widehat{\sigma }}{c}e^{-\theta t} \\ 
\widetilde{Z}_{t}(0) & = & \widetilde{Z}(t).%
\end{array}%
\right.  \label{OU2}
\end{equation}%
We may choose $y_{0},c$ and $F$ freely in order to obtain simplifications
and assume that $y_{0}=0$, $c=1$ and $F(s)=\frac{\theta \widehat{\mu }}{%
\widehat{\sigma }}e^{\theta s}$. Then (\ref{OU1}) becomes $\frac{d\widetilde{%
Z}}{ds}=-\theta \widetilde{Z}$, with $\widetilde{Z}(0)=r_{0}$. Hence $%
\widetilde{Z}(t)=r_{0}e^{-\theta t}$. Solving (\ref{OU2}) we find%
\begin{equation*}
Z(t,Y_{t})=r_{0}e^{-\theta t}+\widehat{\sigma }e^{-\theta t}Y_{t}.
\end{equation*}%
With $Y_{t}=\frac{\theta \widehat{\mu }}{\overline{\sigma }}%
\int_{0}^{t}e^{\theta s}ds+\int_{0}^{t}e^{\theta s}dB_{s}=\frac{\widehat{\mu 
}}{\overline{\sigma }}(e^{\theta t}-1)+\int_{0}^{t}e^{\theta s}dB_{s}$ we
derive the well-known formula%
\begin{equation*}
R_{t}=r_{0}e^{-\theta t}+\widehat{\mu }(1-e^{-\theta t})+\widehat{\sigma }%
\int_{0}^{t}e^{\theta (s-t)}dB_{s}\text{.}
\end{equation*}

\item \textbf{Homogeneous linear stochastic differential equations}.
Consider the stochastic differential equation%
\begin{equation}
\left \{ 
\begin{array}{lllll}
dX_{t} & = & \alpha (t)X_{t}dt+\beta (t)X_{t}dB_{t} & \qquad & 0\leq t<T \\ 
X_{0} & = & x_{0}. &  & 
\end{array}%
\right.  \label{stochasticdifferentialequationlinear}
\end{equation}%
We consider the non-degenerate case where $\beta (t)\neq 0$. It is
well-known \cite{Arnold} that the solution of (\ref%
{stochasticdifferentialequationlinear}) is given by%
\begin{equation}
X_{t}=\exp \left( \int \limits_{0}^{t}\alpha (s)-\frac{1}{2}\beta
^{2}(s)ds+\int \limits_{0}^{t}\beta (s)dB_{s}\right) .
\label{Oplossinghomogeenlineair}
\end{equation}%
We give a direct derivation, not using verification by Ito's lemma. The
integration condition (\ref{differentiaalG}) becomes%
\begin{equation}
\frac{G^{\prime }(t)}{G(t)}=\frac{\beta ^{\prime }(t)}{\beta (t)}
\label{integratieconditielineair}
\end{equation}%
Hence $G(t)=c\beta (t)$ for some $c\in \mathbb{R}$. One may put $c=1$ and $%
F=0$. Then (\ref{vergelijkinghorizontaalmusigmaFG}) and (\ref%
{vergelijkingvertikaalmusigmaFG}) become%
\begin{equation}
\left \{ 
\begin{array}{lll}
\frac{d\widetilde{Z}}{dt} & = & (\alpha (t)-\frac{1}{2}\beta ^{2}(t))%
\widetilde{Z} \\ 
\widetilde{Z}(0) & = & x_{0}%
\end{array}%
\right.  \label{lineairhorizontaal}
\end{equation}%
and%
\begin{equation}
\left \{ 
\begin{array}{lll}
\frac{d\widetilde{Z}_{\overline{t}}}{dx} & = & \widetilde{Z}_{\overline{t}}
\\ 
\widetilde{Z}_{\overline{t}}(0) & = & \widetilde{Z}(\overline{t}).%
\end{array}%
\right.  \label{lineairvertikaal}
\end{equation}%
The solution of (\ref{stochasticdifferentialequationlinear}) is $%
X_{t}=Z(t,Y_{t})$, with 
\begin{equation*}
Y_{t}=x_{0}=\int \limits_{0}^{t}\beta (s)dB_{s},
\end{equation*}%
Solving the equations (\ref{lineairhorizontaal}) and (\ref{lineairvertikaal}%
), we derive (\ref{Oplossinghomogeenlineair}).

\item \textbf{A non-autonomous nonlinear stochastic differential equation}.
Consider the stochastic differential equation (\ref%
{stochasticdifferentialequation}) with 
\begin{equation*}
\left \{ 
\begin{array}{l}
\mu (t,X_{t})=\frac{\exp \left( -\frac{X_{t}^{2}}{2}\right) }{t+1}%
\int_{0}^{X_{t}}\exp \frac{\xi ^{2}}{2}d\xi -\frac{X_{t}(t+1)^{2}}{2}\exp
\left( -X_{t}^{2}\right) +\exp \left( -\frac{X_{t}^{2}}{2}\right) \\ 
\sigma (t,X_{t})=(t+1)\exp \left( -\frac{X_{t}^{2}}{2}\right) .%
\end{array}%
\right.
\end{equation*}
\end{enumerate}

One verifies that $\mu $ and $\sigma $ satisfy the integration condition (%
\ref{partialdifferentialequationmusigma}). On any time-interval $[0,T],T>0$
the functions $\sigma $ and $\frac{\partial \sigma }{\partial X}$ are
uniformly bounded and $\mu (t,X)$ has an upper bound of the form $%
K\left \vert X\right \vert +L$, where the constants $K$ and $L$ do not depend
on $t$. This means that the remaining conditions of Theorem \ref%
{Theoremequivalence)} are also satisfied. Hence (\ref%
{stochasticdifferentialequation}) has a solution $Z(t,B_{t})$, which may be
found by solving (in principle) the ordinary differential equations (\ref%
{vergelijkinghorizontaalmusigma}), i.e.%
\begin{equation*}
\left \{ 
\begin{array}{lll}
\frac{d\widetilde{Z}}{dt} & = & \frac{1}{t+1}\exp \left( -\frac{\widetilde{Z}%
^{2}}{2}\right) \int_{0}^{\widetilde{Z}}\exp \frac{\xi ^{2}}{2}d\xi +\exp
\left( -\frac{\widetilde{Z}^{2}}{2}\right) \\ 
\widetilde{Z}(0) & = & x_{0},%
\end{array}%
\right.
\end{equation*}%
and (\ref{vergelijkingvertikaalmusigma}), i.e.%
\begin{equation*}
\left \{ 
\begin{array}{lll}
\frac{d\widetilde{Z}_{\overline{t}}}{dx} & = & (\overline{t}+1)\exp \left( -%
\frac{\widetilde{Z}_{\overline{t}}}{2}\right) \\ 
\widetilde{Z}_{\overline{t}}(0) & = & \widetilde{Z}_{0}(\overline{t}).%
\end{array}%
\right.
\end{equation*}

\textbf{Acknowledgement}. I thank T.Sari (University of Mulhouse/INRA
Montpellier) for some enlightening discussions on differential geometry.

\end{document}